\newtheorem{theorem}{Theorem}
\newtheorem*{lemma}{Lemma}
\newtheorem*{remark}{Remark}
\newtheorem*{cor}{Corollary}
\newtheorem*{conj}{Conjecture}
\def\XXint#1#2#3{{\setbox0=\hbox{$#1{#2#3}{\int}$}
  \vcenter{\hbox{$#2#3$}}\kern-.5\wd0}}
\author{Gang Liu}
\address{Department of Mathematics\\University of Minnesota\\Minneapolis, MN 55455}
\email{liuxx895@math.umn.edu}
\title[3-Manifolds with nonnegative Ricci curvature]{3-Manifolds with nonnegative Ricci curvature}
\date{}
\begin{document}
\begin{abstract}For a complete noncompact 3-manifold with nonnegative Ricci curvature, we prove that either it is diffeomorphic to $\mathbb{R}^3$ or the universal cover splits. This confirms Milnor's conjecture in dimension $3$.
\end{abstract}
\maketitle

\section{\bf{Introduction}}

Let $M$ be a complete manifold with nonnegative Ricci curvature, then it is a fundamental question in geometry to find the restriction of the topology on $M$. Recall in 2-dimensional case, Ricci curvature is the same as Gaussian curvature $K$. It is a well known result that if $K \geq 0$, the universal cover is either conformal to $\mathbb{S}^2$ or $\mathbb{C}$.

Let us consider 3-manifolds with nonnegative Ricci curvature.
By using the Ricci flow, Hamilton \cite{[H]} classified all compact 3-manifolds with nonnegative Ricci curvature. He proved that the universal cover is either diffeomorphic to $\mathbb{S}^3$ or $\mathbb{S}^2 \times \mathbb{R}$ or $\mathbb{R}^3$. In the latter two cases, the metric is a product on each factor $\mathbb{R}$.
For the noncompact case, there are some partial classification results. Anderson-Rodriguez \cite{[AR]} and Shi \cite{[Sh]} classified these manifolds by assuming the upper bound of the sectional curvature. Zhu \cite{[Z1]} proved that if the volume grows like $r^3$, then the manifold is contractible. Based on Schoen and Yau's work \cite{[SY1]}, Zhu \cite{[Z2]} also proved that if the Ricci curvature is quasi-positive, then the manifold is diffeomorphic to $\mathbb{R}^3$.

In late 1970s, Yau initiated a program of using minimal surfaces to study 3-manifolds. It turns out that this method is very powerful. For example, Schoen and Yau proved the famous positive mass conjecture \cite{[SY2]}\cite{[SY3]}. Meeks and Yau \cite{[MY1]}\cite{[MY2]} proved the loop theorem, sphere theorem and Dehn lemma together with the equivariant forms. In \cite{[SY1]}, Schoen and Yau proved that a complete noncompact 3-manifold with positive Ricci curvature is diffeomorphic to $\mathbb{R}^3$, they also announced the classification of complete noncompact 3-manifolds with nonnegative Ricci curvature.

In this note we classify complete noncompact 3-manifolds with nonnegative Ricci curvature in full generality. The proof is based on the minimal surface theory developed by Schoen and Fischer-Colbrie \cite{[FS]}, Schoen and Yau \cite{[SY1]} , Schoen \cite{[S]}. We will use the following theorem frequently.
\begin{theorem}\label{thm1}(Schoen-Yau\cite{[SY1]})
Let $M^3$ be a complete 3-manifold with nonnegative Ricci curvature. Let $\Sigma$ be a complete oriented stable minimal surface in $M$, then $\Sigma$ is totally geodesic, and the Ricci curvature of $M$ normal to $\Sigma$ vanishes at all points on $\Sigma$.
\end{theorem}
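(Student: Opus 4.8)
The plan is to combine the second‑variation (stability) inequality with the Gauss equation and then to extract the vanishing from parabolicity of $\Sigma$. Since $\Sigma$ is oriented, fix a global unit normal $\nu$ and let $A$ be the second fundamental form; stability means that for every $\phi\in C^\infty_c(\Sigma)$,
\[
\int_\Sigma |\nabla\phi|^2 \;\ge\; \int_\Sigma\bigl(|A|^2+\mathrm{Ric}_M(\nu,\nu)\bigr)\phi^2 .
\]
As $\mathrm{Ric}_M\ge 0$, the integrand on the right is pointwise nonnegative, so it suffices to produce cutoffs $\phi_i$ with $\phi_i\to 1$ pointwise and $\int_\Sigma|\nabla\phi_i|^2\to 0$: Fatou's lemma then gives $\int_\Sigma\bigl(|A|^2+\mathrm{Ric}_M(\nu,\nu)\bigr)=0$, hence $A\equiv 0$ (so $\Sigma$ is totally geodesic) and $\mathrm{Ric}_M(\nu,\nu)\equiv 0$, which are exactly the two conclusions. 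When $\Sigma$ is closed take $\phi_i\equiv 1$, so assume $\Sigma$ is noncompact.

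To build such $\phi_i$ I would show $\Sigma$ is conformally parabolic. This is where the Gauss equation enters: minimality makes the principal curvatures $\pm\lambda$, so $\det A=-\tfrac12|A|^2$ and the sectional curvature of $M$ along $T_p\Sigma$ equals $K_\Sigma+\tfrac12|A|^2$; expanding $\mathrm{Ric}_M(\nu,\nu)$ in an orthonormal frame adapted to $\Sigma$ yields
\[
|A|^2+\mathrm{Ric}_M(\nu,\nu)\;=\;\tfrac12|A|^2+\tfrac12 R_M-K_\Sigma ,
\]
where $R_M\ge 0$ is the scalar curvature of $M$. Plugging this back, the stability inequality becomes the statement that the Schr\"odinger operator $-\Delta_\Sigma+K_\Sigma$ is nonnegative on $\Sigma$:
\[
\int_\Sigma\bigl(|\nabla\phi|^2+K_\Sigma\phi^2\bigr)\;\ge\;\int_\Sigma\bigl(\tfrac12|A|^2+\tfrac12 R_M\bigr)\phi^2\;\ge\;0 .
\]
By the Fischer--Colbrie--Schoen structure theory, nonnegativity of $-\Delta_\Sigma+K_\Sigma$ on a complete surface yields a positive solution $u$ of $-\Delta_\Sigma u+K_\Sigma u=0$; the conformal metric $u^2 g$ then has nonnegative Gauss curvature, and controlling the ends (Cohn--Vossen, Huber) forces $\Sigma$ to be conformally $\mathbb{C}$ or $\mathbb{C}\setminus\{0\}$. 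Both are parabolic, and since the Dirichlet energy in two dimensions is conformally invariant, $(\Sigma,g)$ is parabolic; the required cutoffs $\phi_i$ (the usual logarithmic cutoffs transported through the uniformizing map) therefore exist.

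I expect the main obstacle to be this last point: passing from the sign condition $-\Delta_\Sigma+K_\Sigma\ge 0$ to conformal parabolicity of $\Sigma$. Since a priori $K_\Sigma$ may be strongly negative and the ends of $\Sigma$ complicated, a soft argument does not suffice — one genuinely needs the two‑dimensional analysis (the conformal change to nonnegative Gauss curvature, together with finite‑total‑curvature theory for the ends). The remaining ingredients — the stability inequality, the Gauss‑equation identity, and the concluding Fatou argument — are routine.
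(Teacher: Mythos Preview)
The paper does not prove Theorem~\ref{thm1}; it is quoted verbatim from Schoen--Yau \cite{[SY1]} and used as a black box throughout, so there is no in-paper argument to compare against.

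That said, your sketch is the standard route from \cite{[SY1]} and \cite{[FS]} and is correct in outline: the Gauss-equation identity
\[
|A|^2+\mathrm{Ric}_M(\nu,\nu)=\tfrac12|A|^2+\tfrac12 R_M-K_\Sigma
\]
is exactly what one uses to convert stability into nonnegativity of $-\Delta_\Sigma+K_\Sigma$, and the Fischer--Colbrie--Schoen step (positive solution $u$, conformal metric $u^2g$ with $\tilde K=u^{-2}|\nabla\log u|^2\ge 0$) is the right mechanism for forcing the conformal type. The one point you flag as delicate is genuinely delicate: the metric $u^2g$ need not be complete, so you cannot simply quote Cohn--Vossen on it; the actual argument in \cite{[FS]} feeds $u$ back into the stability inequality (with test functions $\phi u$) and uses a logarithmic cutoff together with an area-growth bound to extract the conclusion, rather than literally classifying the conformal type first. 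If you want a self-contained write-up, that is the step to spell out carefully; the rest of your plan (the closed case, the Fatou argument, conformal invariance of Dirichlet energy) is routine as you say.
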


Below is our result:
 \begin{theorem}\label{thm2}
 Let $M^3$ be a complete noncompact 3-manifold with nonnegative Ricci curvature, then either $M^3$ is diffeomorphic to $\mathbb{R}^3$ or the universal cover of $M^3$ is isometric to a Riemann product $N^2 \times \mathbb{R}$ where $N^2$ is a complete 2-manifold with nonnegative sectional curvature.
 \end{theorem}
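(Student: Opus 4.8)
The plan is to prove the equivalent statement: if $M$ is not diffeomorphic to $\mathbb R^3$, then its universal cover $\widetilde M$ is isometric to $N^2\times\mathbb R$ with $N^2$ of nonnegative curvature. Note first that $\widetilde M$ is noncompact, since a covering of the noncompact manifold $M$ cannot be compact. By the Hurewicz theorem together with $H_k(\widetilde M)=0$ for $k\ge3$ (it is a noncompact $3$-manifold), $\widetilde M$ is contractible --- equivalently $M$ is aspherical --- if and only if $\pi_2(M)=0$; and by the characterization of $\mathbb R^3$ among contractible open $3$-manifolds (Stallings), $M\cong\mathbb R^3$ precisely when $M$ is contractible and simply connected at infinity. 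Hence if $M\not\cong\mathbb R^3$ then either (i) $\pi_2(M)\neq0$; or (ii) $\pi_2(M)=0$ and $\pi_1(M)\neq0$; or (iii) $M$ is contractible but not simply connected at infinity. I will produce in each case a complete, two-sided, (locally) area-minimizing surface $\Sigma$ --- in $\widetilde M$ or in a cover of $M$ --- and then invoke Theorem \ref{thm1}.

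The common final step is a \emph{splitting lemma}: a complete, two-sided, locally area-minimizing surface $\Sigma$ in a $3$-manifold $X$ with $\mathrm{Ric}\ge0$ makes $X$ split isometrically as $\Sigma\times\mathbb R$. Indeed, by Theorem \ref{thm1} $\Sigma$ is totally geodesic with $\mathrm{Ric}(\nu,\nu)\equiv0$, so by the Gauss equation $\Sigma$ has nonnegative Gaussian curvature and is therefore parabolic. Writing $r$ for the signed distance to $\Sigma$, the mean curvature $h(r)$ of the level hypersurface satisfies $h(0)=0$ and, by the Riccati equation with $\mathrm{Ric}\ge0$, $h'\le-\tfrac12h^2\le0$; comparing the areas of the level hypersurfaces with $\Sigma$ (using local area-minimality of $\Sigma$, with parabolic cutoffs to treat noncompact $\Sigma$) forces $h\equiv0$ near $\Sigma$, whence $|A|^2=\mathrm{Ric}(\nu,\nu)=0$ on all these hypersurfaces, so $\mathrm{Hess}\,r\equiv0$ and $\nabla r$ is a parallel unit field. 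Since each level hypersurface is again complete, totally geodesic, and locally area-minimizing, a bootstrap extends the product structure to all of $X$.

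It remains to produce $\Sigma$. Case (i): $\pi_2(\widetilde M)=\pi_2(M)\neq0$ and $\widetilde M$ is simply connected, so $H_2(\widetilde M)\neq0$; by the existence theory of Meeks--Simon--Yau (with Bishop--Gromov volume comparison preventing areas from collapsing at infinity) $\widetilde M$ contains a least-area embedded minimal $2$-sphere, which is stable, and the splitting lemma gives $\widetilde M=S^2\times\mathbb R$. Case (ii): a finite cyclic group cannot act freely on the contractible $3$-manifold $\widetilde M$ (cohomological dimension), so every nontrivial deck transformation has infinite order; choosing one, $g$, and passing to the cyclic cover $M_g$ with $\pi_1(M_g)=\langle g\rangle$ (and to its orientation double cover if needed), the generator of $H^1(M_g;\mathbb Z)$ is Poincar\'e dual to a nonzero locally finite $2$-class, and minimizing area in it (Schoen--Yau's theory for noncompact ambient spaces with $\mathrm{Ric}\ge0$) yields a properly embedded, two-sided, stable minimal surface in $M_g$; the splitting lemma gives $M_g=N^2\times\mathbb R$, hence $\widetilde M=\widetilde{M_g}=\widetilde{N^2}\times\mathbb R$. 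Case (iii): failure of simple connectivity at infinity yields loops $\gamma_i\to\infty$ not null-homotopic rel a fixed compact $K_0$; solving the Plateau problem (Meeks--Yau) fills each $\gamma_i$ by an area-minimizing disk $D_i$, which must meet $K_0$ (else $\gamma_i$ bounds in $M\setminus K_0$); the interior curvature estimate for stable minimal surfaces (Schoen; Fischer-Colbrie--Schoen), applied on compact regions where the ambient geometry is bounded and using $\partial D_i\to\infty$, gives uniform bounds on the second fundamental forms of the $D_i$ over fixed balls, and a subsequential limit is a complete stable minimal surface through $K_0$; the splitting lemma then forces $M=\widetilde M=N^2\times\mathbb R$, so $N^2=\mathbb R^2$ and $M\cong\mathbb R^3$ --- contradicting (iii). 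Thus (iii) cannot occur, and $\widetilde M$ splits in all remaining cases.

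I expect the main obstacle to be the production of $\Sigma$ in the noncompact settings of cases (ii) and (iii): keeping the area-minimizing sequences (for the dual class in the cyclic cover, respectively the Plateau disks) from escaping to infinity, and showing the limit is a genuinely complete, properly embedded minimal surface rather than something degenerate. This is precisely where the full strength of $\mathrm{Ric}\ge0$ --- Bishop--Gromov volume comparison together with the Schoen / Fischer-Colbrie--Schoen curvature and stability estimates --- must be used, and where the real difficulty of Milnor's conjecture (that $\pi_1(M)$ need not be finitely generated, so no cocompactness is available) is concentrated. A secondary technical point is the area comparison inside the splitting lemma when $\Sigma$ is noncompact, which rests on the parabolicity of nonnegatively curved surfaces.
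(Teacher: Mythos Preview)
Your case decomposition (i)/(ii)/(iii) matches the paper's, and your production of a stable minimal surface in each case is broadly the same strategy. The essential divergence is your \emph{splitting lemma}: you try to go from a single stable (or locally area-minimizing) surface $\Sigma$ to a product via the Riccati equation for the signed distance, an area comparison, and a bootstrap. The paper does \emph{not} argue this way, and for a genuine reason.

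Your Riccati/area-comparison step needs $\Sigma$ to be embedded and two-sided so that a signed distance exists, and it needs a workable area inequality on a noncompact $\Sigma$. Both fail in case~(iii): the limit of the Plateau disks is only known to be a complete \emph{stable} minimal surface, possibly immersed and improper (the paper's Remark says exactly this), so there is no signed distance to speak of and no homological minimality to invoke. Even in case~(ii), the bootstrap is not free: once you have shown the nearby level hypersurfaces are totally geodesic, you must know they are again locally area-minimizing to iterate, and stability alone (which is all that Theorem~\ref{thm1} uses) does not give that. The ``parabolic cutoff'' you allude to makes the second variation inequality effective on noncompact $\Sigma$, but second variation is stability, not minimality; it does not by itself force $h\equiv 0$ on the level sets.

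What the paper does instead is to manufacture a \emph{family} of totally geodesic surfaces through a prescribed point. The key device you are missing is Ehrlich's local deformation: one perturbs the metric to make $\mathrm{Ric}>0$ on an annulus around a chosen $p$ while keeping $\mathrm{Ric}\ge 0$ outside a tiny ball at $p$; Theorem~\ref{thm1} then forces the stable surface (built exactly as you describe) to pass through that ball. Letting the perturbation shrink yields, in the original metric, a totally geodesic surface through $p$, and repeating for $p_i\to p$ gives nearby surfaces $H_i\to H$. The rescaled (piecewise) distance $d_k$ from $H$ to $H_k$---defined via small geodesic patches precisely so that it makes sense for immersed, improper $H_k$---limits to a nontrivial solution of $f_{ij}+fR_{NijN}=0$ on $H$. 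The hard Lemma of the paper (using the conformal parametrization of $H$, Cohn--Vossen, and a Picard-type argument) shows $f\equiv\text{const}$, whence $R_{NijN}\equiv 0$ on $H$, and the splitting follows. This Jacobi-field route replaces your Riccati/area-comparison entirely and is what makes the argument go through in the immersed/improper situation of case~(iii).

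Two smaller points. In case~(i) the paper does not minimize at all: $\pi_2\neq 0$ on a simply connected $M$ forces two ends (Schoen--Yau), and Cheeger--Gromoll gives the splitting directly; your Meeks--Simon--Yau route would need a separate no-escape argument. And to pass from ``contractible and simply connected at infinity'' to $\mathbb{R}^3$ the paper also uses irreducibility, supplied by the Poincar\'e conjecture.
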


In \cite{[M]}, Milnor proposed the following conjecture:
 \begin{conj}
If a complete manifold has nonnegative Ricci curvature, then the fundamental group is finitely generated.
\end{conj}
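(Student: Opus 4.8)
The plan is to deduce the dimension-$3$ case directly from Theorem~\ref{thm2} (the conjecture being open in higher dimensions, I only treat $\dim M = 3$, as the abstract promises). If $M$ is compact its fundamental group is automatically finitely generated, so I may assume $M$ is noncompact and apply Theorem~\ref{thm2}.

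In the first alternative of Theorem~\ref{thm2}, $M$ is diffeomorphic to $\mathbb R^3$, so $\pi_1(M)$ is trivial. In the second alternative, the universal cover is an isometric product $\widetilde M = N^2 \times \mathbb R$ with $N^2$ of nonnegative sectional curvature. Since a Riemannian product of nonnegatively curved factors is itself nonnegatively curved, $\widetilde M$ carries a metric of nonnegative sectional curvature, and as $\pi_1(M)$ acts on $\widetilde M$ freely, properly discontinuously and by isometries, the quotient $M$ is a complete noncompact Riemannian $3$-manifold of nonnegative sectional curvature. By the Cheeger--Gromoll soul theorem, $M$ is diffeomorphic to the total space of the normal bundle of a compact totally geodesic soul $S \subset M$; in particular $M$ deformation retracts onto $S$, so $\pi_1(M) \cong \pi_1(S)$ is finitely generated because $S$ is a compact manifold. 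This proves Milnor's conjecture in dimension $3$.

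I do not expect a serious obstacle once Theorem~\ref{thm2} is available: the argument is essentially formal, the only things to check being that the product $N^2 \times \mathbb R$ really has nonnegative sectional curvature and that the hypotheses of the soul theorem are met, both of which are immediate. If one prefers to avoid the soul theorem in the split case, one can instead analyze $\mathrm{Isom}(N^2 \times \mathbb R)$ directly: since $N^2$ is simply connected with $K \ge 0$ it is diffeomorphic to $\mathbb R^2$ or $\mathbb S^2$, every isometry preserves the de Rham splitting unless $\widetilde M$ is flat, and one concludes that $\pi_1(M)$ is, up to a finite normal subgroup, a discrete subgroup of $\mathrm{Isom}(\mathbb R) = \mathbb R \rtimes \mathbb Z/2$ — hence finitely generated — while the remaining flat case $\widetilde M = \mathbb R^3$ falls to Bieberbach's theorem on discrete subgroups of the Euclidean group.
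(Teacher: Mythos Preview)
Your proof is correct and follows essentially the same route as the paper: reduce via Theorem~\ref{thm2} to the case where $M$ has nonnegative sectional curvature, then invoke a classical result. The only difference is cosmetic---the paper cites Gromov~\cite{[G]} for the finite-generation of $\pi_1$ in the nonnegatively curved case, whereas you use the Cheeger--Gromoll soul theorem; both work, and your version has the mild advantage of making the deformation retract onto a compact soul explicit.
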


\begin{cor}
Milnor's conjecture is true in dimension $3$.
 \end{cor}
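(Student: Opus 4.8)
The plan is to obtain the Corollary as an immediate consequence of Theorem~\ref{thm2}. Let $M^3$ be a complete $3$-manifold with $\mathrm{Ric}\ge 0$; the goal is to show $\pi_1(M^3)$ is finitely generated. First I would dispose of the compact case, where there is nothing to do: a compact manifold is a finite CW complex, so its fundamental group is finitely presented, in particular finitely generated.

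So assume $M^3$ is noncompact and apply Theorem~\ref{thm2}. If $M^3$ is diffeomorphic to $\mathbb{R}^3$, then $\pi_1(M^3)$ is trivial and we are done. In the remaining case the universal cover $\widetilde{M^3}$ is isometric to a product $N^2\times\mathbb{R}$ with $N^2$ complete and of nonnegative sectional curvature. The key point of the reduction is that such a product has $\sec\ge 0$ (the only possibly nonzero sectional curvatures are those of planes tangent to the $N^2$ factor), and since sectional curvature is a local invariant of the metric, $M^3$ itself — not merely its universal cover — has $\sec\ge 0$.

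Now I would invoke the Cheeger--Gromoll soul theorem: a complete Riemannian manifold with $\sec\ge 0$ is diffeomorphic to the total space of the normal bundle of a compact totally geodesic submanifold (its soul), hence is homotopy equivalent to a compact manifold. Consequently $\pi_1(M^3)$ is isomorphic to the fundamental group of that soul, which is finitely presented; in particular $\pi_1(M^3)$ is finitely generated. Together with the compact and $\mathbb{R}^3$ cases this proves Milnor's conjecture in dimension $3$.

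The genuine content is entirely in Theorem~\ref{thm2}, so there is no real obstacle left; the one step requiring a little care is recognizing that the splitting of the universal cover upgrades the hypothesis from $\mathrm{Ric}\ge 0$ to $\sec\ge 0$ on $M^3$, which is exactly what makes the soul theorem applicable. One could instead argue upstairs with the deck group $\Gamma\le\mathrm{Isom}(N^2\times\mathbb{R})$: when $N^2$ is nonflat the de Rham decomposition forces $\mathrm{Isom}(N^2\times\mathbb{R})=\mathrm{Isom}(N^2)\times\mathrm{Isom}(\mathbb{R})$, so $\Gamma$ is an extension of a discrete subgroup of $\mathrm{Isom}(\mathbb{R})$ — cyclic or infinite dihedral — by a group acting freely on $N^2$, reducing to the known two-dimensional case; when $N^2$ is flat one invokes Bieberbach's theorems. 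But passing to the soul theorem is cleaner.
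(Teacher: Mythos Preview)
Your proof is correct and follows essentially the same route as the paper: reduce via Theorem~\ref{thm2} to the case $\sec\ge 0$ on $M$, then invoke a known result for such manifolds. The only difference is cosmetic---the paper cites Gromov \cite{[G]} in place of the Cheeger--Gromoll soul theorem, but either reference closes the argument.
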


\noindent\emph{Proof of the corollary.}

If $M$ is diffeomorphic to $\mathbb{R}^3$, then the conclusion is obvious.
Otherwise by theorem \ref{thm2},  $M$ has nonnegative sectional curvature. Hence the corollary follows from a result of Gromov \cite{[G]}.

\qed

\begin{center}
\bf  {\quad Acknowledgment} 
\end{center}

The author would like to thank Professors Richard Schoen, Jiaping Wang, Shing-Tung Yau for their interests in this note. 
He also thanks Chenxu He for informing him the paper \cite{[E]}.

\section{\bf{proof of the theorem}}

\noindent\emph{Proof of Theorem \ref{thm2}.}

We assume $M$ is not flat, otherwise the conclusion is obvious.

Let us review Schoen and Yau's argument in \cite{[SY1]}.
Assume $M$ is simply connected, if $\pi_2(M) \neq 0$,  according to Lemma 2 in \cite{[SY1]}, $M$ must have at least two ends. From Cheeger-Gromoll splitting theorem \cite{[CG]}, the universal cover splits. So we assume $\pi_2(M) = 0$. Therefore, the universal cover of $M$ is contractible. If $M$ is not simply connected, Schoen and Yau \cite{[SY1]} proved that $\pi_1(M)$ must have no torsion elements. Thus, after replacing $M$ by a suitable covering, we may assume that $\pi_1(M) = \mathbb{Z}$ and that $M$ is orientable. Let $\gamma$ be a Jordan curve representing the generator of the fundamental group of $M$. Consider an exaustion of $M$ by $\Omega_i$, where $\partial \Omega_i$ is a disjoint union of smooth 2-manifolds. We may assume that $\gamma$ lies in each $\Omega_i$. By Poincare duality for manifolds with boundary, there exists a oriented surface $\Sigma_i
\subset \Omega_i$ such that $\partial \Sigma_i \subset \partial \Omega_i$, moreover, the oriented intersection number of $\Sigma_i$ with $\gamma$ is 1. We would like to minimize the area among all surfaces which are in the same homology class as $\Sigma_i$ and with the same boundary as $\Sigma_i$. We can perturb the metric near $\partial \Omega_i$ such that the mean curvature is positive with respect to the outer normal vector. So there exists a minimizing surface for each $i$, which we still call $\Sigma_i$. For each $i$, the intersection of $\Sigma_i$ with $\gamma$ is nonempty. Therefore, a subsequence of $\Sigma_i$ converges to an oriented stable minimal surface $\Sigma$ in $M$.
 If the Ricci curvature is strictly positive on $M$, then this contradicts theorem \ref{thm1}.

Let us deal with the case when the Ricci curvature is nonnegative. For a fixed point $p \in M$, we may assume that $p$ does not lie on $\gamma$, otherwise we perturb $\gamma$ a little bit such that $p$ is not on $\gamma$.
According to the result in \cite{[E]} by Ehrlich, we can perturb the metric such that the Ricci curvature is strictly positive in a small annulus around $p$, while the metric remains the same outside the annulus(this means that inside the ball bounded by the annulus, the Ricci curvature might be negative). For reader's convenience, we give the details as follows:
According to the well-known formula, if $g(t) = e^{2tf}g_0$ and $|\nu|_{g(0)} = 1$, then
$$Ric^t(v, v) = e^{-2tf}(Ric(v, v) - t(n-2)\nabla^2 f(v, v) - t\Delta f + t^2(n-2)(v(f)^2-|\nabla f|^2))$$ where $n = dim(M) = 3$. Define $r$ to be the distance function to $p$. For a very small $R > 0$, consider the function
$\rho = R-r$ for $\frac{R}{2}< r < R$. Then we extend $\rho$ to be a positive smooth function for $0 \leq r < \frac{R}{2}$.
Define $f = -\rho^5$, for $|v| = 1$,
$$Ric^t(v, v)=e^{2t\rho^5}(Ric(v, v) + t(n-2)\nabla^2 (\rho^5)(v, v) + t\Delta (\rho^5) + t^2(n-2)(v(\rho^5)^2-|\nabla \rho^5|^2)).$$
Now $\nabla^2 (\rho^5)(v, v) = 20\rho^3v(\rho)^2+ 5\rho^4\nabla^2(\rho)(v, v)$, therefore,
\begin{equation}
Ric^t(v, v) \geq e^{2t\rho^5}(Ric(v, v) + 20t\rho^3+ 5t\rho^4(\Delta \rho+ (n-2)\nabla^2 (\rho)(v, v)) -25(n-2)t^2\rho^8).
\end{equation}
From now on, we restrict $r$ such that $\lambda R < r < R$, where $\lambda > \frac{1}{2}$ is to be determined.
Using the fact that near $p$, the manifold is almost Euclidean, for small $R$,
we have $$|\Delta \rho + (n-2)\nabla^2 \rho(v, v)|  \leq \frac{9(2n-3)}{8(R-\rho)}.$$
We plug this in (1). So for all small $t$, $g(t)$ have strictly positive Ricci curvature in an annulus $B_p(R)\backslash B_p(\lambda R)$ for $\lambda = \frac{7}{8}$. The metric remains the same outside $B_p(R)$. The deformation is $C^4$ continuous with respect to the metric and $C^{\infty}$ with respect to $t$.

 We apply this perturbation finitely many times so that the Ricci curvature is positive on $\gamma$(each time we perturb the metric a little bit around a point) and that the Ricci curvature is nonnegative except a small neighborhood of $p$. Then we can minimize the area as before. This will yield a complete stable minimal surface $\Sigma$. Now the claim is that $\Sigma$ must pass through the small neighborhood of $p$. If this is not true, then on $\Sigma$,  the Ricci curvature is nonnegative, the normal Ricci curvature is strictly positive somewhere on $\gamma$. This contradicts theorem \ref{thm1}.

Using $t$ to denote the deformation parameter, we shrink the size of the neighborhood of $p$ where the Ricci curvature might be negative. So we get a sequence of metrics on $M$ and for each metric, a stable minimal surface passing through a small neighborhood of $p$. We may let $t\to 0$ sufficiently fast so that these metrics are converging to the initial metric in $C^4$ sense. Taking the limit for a subsequence of these complete minimal surfaces, we obtain a complete oriented stable minimal surface passing through $p$, with the initial metric. According to theorem \ref{thm1}, this surface is totally geodesic with vanishing normal Ricci curvature.

Since the manifold is not flat, there exists a neighborhood $U$ such that the scalar curvature is strictly positive in $U$. Consider a point $p \in U$ and a sequence of points $p_i \to p$, where all $p_i \in U$. Through each $p_i$, there exists a complete totally geodesic surface $H_i$.  So a subsequence of $H_i$ converges to a complete totally geodesic surface $H$ through $p$. We assume that the normal vector of $H_i$ at $p_i$ converges to the normal vector of $H$ at $p$. We can choose $p_j$ so that for any $j > i$, $p_j$ does not lie on $H_i$. Therefore, for all large $i$, $H_i$ does not coincide with $H$.

By the assumption of $U$, $H_i$ and $H$ are not flat. They have nonnegative sectional curvature, so they are conformal to $\mathbb{C}$. The normal bundle is trivial. We denote the unit normal vector of $H$ by $N$. For any $x \in H$, when $k$ is very large, we shall construct a piece $\Sigma_k \subset H_k$. For a shortest geodesic on $H$ connecting $p$ and $x$, we assume $x = exp_p(v)$ where $v \in T_{p}{H}$. If the geodesic is not unique, then we just choose one. We parallel transport the vector $v$ along the shortest geodesic connecting $p$ and $p_k$ to obtain a tangent vector $u_k$ at $p_k$. Then we project $u_k$ to $T_{p_k}(H_k)$ to get $v_k \in T_{p_k}(H_k)$. Define a point $x_k = exp_{p_k}v_k$. Since we may have multiple choices of $v$, $x_k$ may be different. However, when $k$ is very large, these $x_k$ are close to $x$, since $p_k \to p$ and the normal vector of $H_k$ at $p_k$ is converging to the normal vector of $H$ at $p$. Moreover, these $x_k$ belong to the same piece of $H_k$, i.e, the $H_k$ distances between them are very small, since $H_k$ and $H$ are simply connected. Let $r = \frac{1}{10}inj_M(x)$ where $inj_M(x)$ denotes the injective radius of $M$ at $x$. Define $\Sigma_k = B_{H_k}(x_k, r)$. From the construction of $x_k$, for $k$ large, the normal vector of $H$ at $x$ and the normal vector of $H_k$ at $x_k$ are close in the obvious sense, as the normal vectors of $H$ and $H_k$ are parallel along each surfaces. Since $x_k$ is very close to $x$, $inj_M(x_k) \geq \frac{1}{2}inj_M(x) \geq r$. Therefore $dist_M(\partial B_{H_k}(x_k, r), x) \geq r - dist_M(x_k, x) > 5 dist_M(x, x_k)$ for $k$ large. Thus if $l$ is the normalized shortest geodesic connecting $x$ and $\Sigma_k$, $l$ will intersect the inner part of $\Sigma_k$, say at the point $\overline x_k$. Triangle inequality implies that $dis_{H_k}(x_k, \overline x_k) \leq 2 dis_M(x, x_k)$. Therefore, the unit normal vector of $H$ at $x$ and the unit normal vector of $H_k$ at $\overline x_k$ are close in the obvious sense.

Denote the initial tangent vector of $l$ at $x$ by $e$. The oriented distance is defined by $d_k(x) = dist_M(x, \Sigma_k)Sign(\langle e, N\rangle)$ for $x \in H$. The function $Sign(t) = 1$ when $t > 0$; $Sign(t) = -1$ when $t < 0$; $Sign(t) = 0$ when $t = 0$. For any $x \in H$, $d_k(x)$ is well defined and smooth for $k$ sufficiently large. Via the second variation of arc length, there is a nice pinching estimate for the Hessian of $d_k(x)$ when $d_k(x)$ is very small, namely, $$-d_k(x)(R_{NijN}+Sign(d_k(x))\epsilon(k, x)) \leq (d_k(x))_{ij} \leq -d_k(x)(R_{NijN}-Sign(d_k(x))\epsilon(k, x))$$
where $\lim\limits_{k \to \infty}\epsilon(k, x) = 0$ and the convergence is uniform for any compact set of $H$. In the above estimate, we have used the fact that for $k$ large, the normal direction of $H_k$ at $\overline x_k$ and the normal direction of $H$ at $x$ are close in the obvious sense.
Since $d_k$ does not vanish identically, after a suitable rescaling, a subsequence converges to a nonzero function $f$ when $k \to \infty$. Then $f$ satisfies
\begin{equation} f_{ij} + fR_{NijN} = 0
 \end{equation}
where $f_{ij}$ is the Hessian of $f$ on $H$ with the induced metric. Moreover, $\Delta f = 0$ since the normal Ricci curvature vanishes identically.

\begin{remark}
We use the rescaled distance function to approximate the variational vector field on $H$. If the surfaces $H_k$ and $H$ are properly embedded, then we can simply define $d_k(x) = dist_M(x, H_k)Sign(\langle e, N\rangle)$. We define the function $d_k(x)$ as in last paragraph because in the final part of the paper, when we try to show that $M$ is simply connected at infinity, we obtain stable minimal surfaces which could be immersed and improper.
\end{remark}

\begin{lemma} $f \equiv Constant$.
\end{lemma}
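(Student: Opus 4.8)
The plan is to combine harmonicity of $f$ on the parabolic surface $H$ with two pieces of curvature information and to finish with a Liouville-type property. Since $\Delta f=0$ and $H$ is conformal to $\mathbb{C}$, $H$ is parabolic and $f=\operatorname{Re} F$ for an entire function $F$ in the uniformizing coordinate; write $g_H=e^{2\phi}|dz|^2$. By Bochner's formula and $K_H\ge 0$,
\[
\Delta|\nabla f|^2 = 2|\nabla^2 f|^2 + 2K_H|\nabla f|^2 \ge 0,
\]
so $|\nabla f|^2$ is subharmonic; and since $\Delta f=0$ forces $\bigl|\nabla|\nabla f|\bigr|^2=\tfrac12|\nabla^2 f|^2$, this refines to $\Delta\log|\nabla f|=K_H$ away from the isolated critical points of $f$, so $\log|\nabla f|$ is subharmonic on all of $H$. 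For the second input, Theorem \ref{thm1} gives $\operatorname{Ric}_M(N,N)\equiv 0$ on $H$, and since $H$ is totally geodesic the Codazzi equation gives $\operatorname{Ric}_M(N,\cdot)\equiv 0$ on $H$; with the Gauss equation this yields, on $H$,
\[
\operatorname{Ric}_M|_{TH} = K_H\, g_H - \tfrac1f\nabla^2 f,
\]
where (2) guarantees $\nabla^2 f$ vanishes wherever $f$ does, so the right side is smooth, and $\operatorname{Ric}_M\ge 0$ gives the pointwise bound $|\nabla^2 f|\le\sqrt2\,|f|\,K_H$. Finally, $H$ is a complete nonnegatively curved non-flat surface, so $0<\int_H K_H\,dA_H\le 2\pi$ by the Cohn-Vossen inequality.

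Now suppose $f$ is non-constant. Since $\log|\nabla f|$ is subharmonic on the parabolic surface $H$, it is either a finite constant or unbounded above; a finite constant would make $|\nabla f|$ a positive constant, whence the Bochner identity forces $K_H\equiv 0$, contradicting non-flatness. So $|\nabla f|$ is unbounded, and I would rule this out using $|\nabla^2 f|\le\sqrt2|f|K_H$ together with the asymptotic geometry of $H$. A complete nonnegatively curved surface conformal to $\mathbb{C}$ is, outside a compact set, $C^2$-close (after scaling) to a flat cone of total angle $2\pi-\int_H K_H<2\pi$; the bound $|\nabla^2 f|\le\sqrt2|f|K_H$ confines the second covariant derivative of $f$ to the finite-mass region where $K_H$ is concentrated and should force $f$ to grow at most linearly in the intrinsic distance. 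On the model cone every harmonic function of growth slower than $\tfrac{2\pi}{2\pi-\int_H K_H}>1$ --- in particular of at most linear growth --- is constant; transporting this back to $H$ gives $f\equiv\text{const}$, a contradiction. Hence $f$ is constant. (In particular (2) then gives $R_{NijN}\equiv 0$ on $H$, which is what this is used for.)

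The one real obstacle is the growth estimate: converting $\int_H K_H<\infty$ and $|\nabla^2 f|\le\sqrt2|f|K_H$ into a linear (or at worst polynomial) bound on $f$, and matching the exponent against the admissible growth rates on the asymptotic cone. A one-dimensional comparison along a single geodesic ray, where $u=f\circ\gamma$ solves $\ddot u=-A(\dot\gamma,\dot\gamma)\,u$ with $\int|A(\dot\gamma,\dot\gamma)|\le\sqrt2\int_0^\infty K_H(\gamma)<\infty$, only yields a sub-exponential bound, so genuinely two-dimensional information (the finite total curvature of $H$) must enter. A function-theoretic variant makes the same point sharply: from $-\Delta_0\phi=K_He^{2\phi}$ and completeness one gets $\phi(z)=-\tfrac{1}{2\pi}\int\log|z-w|\,K_He^{2\phi}\,dA_0+\text{const}$, so $\phi\sim-\tfrac{\kappa}{2\pi}\log|z|$ and $\phi_z\sim-\tfrac{\kappa}{4\pi z}$ off a sparse family of circles ($\kappa=\int_H K_H>0$); feeding this into the $zz$-component of (2), $\tfrac12 F''-\phi_z F'=-fA_{zz}$ with $|A_{zz}|\le\tfrac12 K_He^{2\phi}$, and using that $K_He^{2\phi}\in L^1(dA_0)$ is $o(|z|^{-2}e^{-2\phi})$ along circles $|z|=\rho_j\to\infty$, a comparison of orders of magnitude there forces $F''\equiv 0$ and then, since $\kappa>0$ makes $\phi_z$ genuinely of order $|z|^{-1}$, forces $F'\equiv 0$ --- i.e.\ $f$ constant. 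Everything else is Bochner, Gauss-Codazzi, and parabolicity.
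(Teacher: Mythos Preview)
Your preliminary steps are correct: $\Delta\log|\nabla f|=K_H$ holds away from critical points (and distributionally everywhere), and the bound $|\nabla^2 f|\le\sqrt2\,|f|\,K_H$ is precisely the inequality the paper also derives. The gap is exactly where you say it is, and neither sketch closes it. In your first approach, integrability of $K_H$ over $H$ gives no pointwise or ray-wise decay, so the claimed linear intrinsic growth of $f$ is unjustified (your own ODE remark already concedes this), and even granting linear growth the cone Liouville statement does not transfer to $H$ without quantitative control on the convergence to the cone. In your second approach, the identity $\tfrac12 F''-\phi_z F'=-fA_{zz}$ is correct, but nothing forces $F''\equiv0$: the left side is not holomorphic (since $\phi_z$ is not), the $L^1$ bound on $K_H e^{2\phi}$ yields only integrated smallness on your good circles while all three terms may be individually large and cancel, and the last step ``$\phi_z$ of order $|z|^{-1}$ forces $F'\equiv0$'' would require pointwise decay of $fA_{zz}$ faster than $|z|^{-1}$, which you do not have.

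The paper avoids growth estimates entirely. If $f$ changes sign, equation~(2) makes the nodal set a disjoint union of geodesics; writing $h=f+ig$ holomorphic, $|\nabla g|=|\nabla f|$ is constant along each nodal geodesic so these geodesics are proper, and a Hessian-comparison argument shows that two of them would bound a flat strip on which (2) forces $f$ to be affine, contradicting two nodal components. With a single nodal geodesic, $g$ is strictly monotone along it, so the entire function $h$ takes each purely imaginary value exactly once; big Picard then forces $h$ to be a polynomial, hence linear, and one may normalize $f=x$. Now the Hessian bound becomes the concrete inequality $|\nabla_E\rho|/|x|\le-\Delta_E\rho$ for the log-conformal factor $\rho$, and setting $G(t)=\int_{B(t)}(|\nabla_E\rho|/r)\,dx\,dy$ yields $tG'(t)\ge G(t)$ with $G$ bounded by Cohn--Vossen, whence $G\equiv0$ and $H$ is flat --- the contradiction. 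The idea you are missing is this reduction to a \emph{linear} $f$ via the nodal-set analysis and Picard's theorem, which converts the problem from a growth question for an unknown entire function into a tractable first-order integral inequality for the conformal factor.
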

\begin{proof}
First, $H$ is conformal to $\mathbb{C}$, since it is not flat and the Gaussian curvature is nonnegative.
We may assume $f$ changes sign, otherwise from the Liouville property for positive harmonic functions on $H$, $f$ is constant. We observe that the vanishing points of $f$ consists of the geodesics on $H$, since $\nabla f$ is parallel along the vanishing points of $f$(the hessian of $f$ vanishes when $f$ vanishes, see (2)). Moreover, these geodesics do not intersect, otherwise $\nabla f = 0$ along one geodesic. Combining this with (2), we find $f \equiv 0$. This is a contradiction.

Now suppose the zero set of $f$ contains at least 2 distinct geodesics. Let us call them $L_1, L_2$. We claim that $L_1, L_2$ are proper on $H$. The reason is this: we can write $f$ as the real part of a holomorphic function $h = f + ig$, since $f$ is harmonic. By Cauchy-Riemann relation, along the vanishing set of $f$, $g$ is strictly monotonic, $|\nabla g|$ is constant along $L_1$ and $L_2$(since $|\nabla f|$ is constant on each of these two geodesics). But in a compact set of $H$, $|h|$ is bounded, therefore, $L_1$, $L_2$ are properly embedded on $H$. Consider the function $d(x) = dist_H(x, L_2)$ for $x \in L_1$. From the Hessian comparison, we can show that $d'' \leq 0$. Since $L_1$ and $L_2$ never intersect, $d(x) \equiv d_0$. Using the Hessian comparison again, we find the metric to be flat in the domain $\Omega$ bounded by $L_1$ and $L_2$ on $H$. therefore the scalar curvature of the ambient space vanishes on $\Omega$. Considering (2), we find that $f$ is linear on $\Omega$. However, the vanishing points of $f$ have two components, this is a contradiction.

Thus the vanishing points of $f$ consist of one geodesic. By the monotonicity of $g$, for any $t \in \mathbb{R}$, there exists exactly one solution to the equation $h(z) = (0, t) \in \mathbb{C}$.  By big Picard theorem for entire functions, infinity can not be an essential singularity for the entire function $h$, since $h$ can take each value $(0, t)$ only once.
Therefore, $h$ is a polynomial. Using again that there exists exactly one solution to the equation $h(z) = (0, t) \in \mathbb{C}$, we find $h$ to be a linear function. After some conformal transformation, we may assume $f = x$ on the complex plane.
Suppose the metric on $H$ is given by $ds^2 = e^{2\rho}(dx^2 + dy^2)$ using Cartisian coordinate on $\mathbb{C}$.

Let $e_1 = \frac{\partial}{\partial x}, e_2 = \frac{\partial}{\partial y}$,
then $$\langle \nabla_{e_1}e_1, e_1\rangle = e^{2\rho}\rho_1, \langle \nabla_{e_1}e_1, e_2\rangle = -\langle \nabla_{e_2}e_1, e_1\rangle = -e^{2\rho}\rho_2.$$

Therefore $$\nabla_{e_1}e_1 = \rho_1e_1 - \rho_2e_2.$$ Similarly $$\nabla_{e_1}e_2 = \nabla_{e_2}e_1 = \rho_2e_1 + \rho_1e_2, \nabla_{e_2}e_2 = \rho_2e_2 - \rho_1e_1.$$
So the Hessian of $f$ is given by
$$f_{11} = 0 - (\nabla_{e_1}e_1)f = -\rho_1, f_{12} = 0 - (\nabla_{e_1}e_2)f = -\rho_2, f_{22} = 0 - (\nabla_{e_2}e_2)f = \rho_1.$$
Let us write (2) as $f_{ij} + f\tau_{ij} = 0$.
Therefore, the norm of the tensor $\tau$ is $$|\tau_{ij}| = \frac{\sqrt{2}|\nabla_E \rho|}{|x|e^{2\rho}}$$(here $\nabla_E, \Delta_E$ denotes the gradient and the Laplacian with respect to the standard metric on $\mathbb{C}$).
Since the Ricci curvature of the ambient manifold is nonnegative and that the normal Ricci curvature vanishes, $|\tau_{ij}| \leq \sqrt{2}K$ where $K = -\frac{\Delta_E \rho}{e^{2\rho}}$ is the Gaussian curvature on the surface.
Therefore $$\frac{|\nabla_E \rho|}{|x|} \leq -\Delta_E \rho.$$
Let $h = -\rho$, so $$\Delta_E h \geq \frac{|\nabla_E h|}{|x|} \geq \frac{|\nabla_E h|}{r}$$ where $r^2 = x^2 + y^2$. By Cohn-Vossen inequality, $\int K ds^2 \leq 2\pi$. Therefore, $$\int \frac{|\nabla_E h|}{|x|}dxdy \leq \int \Delta_E h dxdy< \infty.$$

Define $$g(t) = \int_{B(t)} \frac{|\nabla_E h|}{r}dxdy$$ where $B(t)$ is the Euclidean disk centered at the origin with radius $t$. We have
$$t\int_{\partial B(t)}\frac{|\nabla_E h|}{r}dl \geq \int_{B(t)} \Delta_E h dxdy \geq \int_{B(t)} \frac{|\nabla_E h|}{r}dxdy.$$ That is to say,  $$tg' \geq g.$$ Solving this inequality, combining with the condition that $g$ is bounded, we find that $$g \equiv 0.$$
Therefore $H$ is flat. But this contradicts the assumption that $H$ is not flat.
Thus the lemma is proved.

\end{proof}

We plug this result in (2). It turns out that $R_{iNNj} = 0$ on $H$. So in fact the rank of the Ricci curvature is 2 at $p$. Therefore, through each point close to $p$, there is a unique totally geodesic surface. From linear algebra, we see these surfaces vary smoothly. By the calculus of variation, the variational vector field of each surface satisfies equation (2). According to the lemma, after a reparametrization, we may assume the variational vector fields of these surfaces are given by $\nu = N$. We call these surfaces $\Sigma_t$, $-\epsilon < t < \epsilon$. Given a point $x \in \Sigma_t$, if $X \in T_x\Sigma_t$, then $\nabla_XN=0$, as $\Sigma_t$ is totally geodesic.  Since $N = \nu$, we may extend $X$ in a small neighborhood of $x$ in $M$ such that $X \in T\Sigma$ and $[X, N] = 0$. We have $<\nabla_NN, X> = -<\nabla_NX, N>= -<\nabla_XN, N> = 0$. Since $X \in T_x\Sigma_t$ is arbitrary, $\nabla_NN = 0$. Thus the unit normal vector of these surfaces is parallel and $\Sigma_t$ are all isometric to $\Sigma_0$ via the integral curve of the variational vector field. Let $I$ be the maximal connected interval of $t$ such that there exists a local isometry $F: \Sigma \times I \to M$ with $F(\Sigma, 0) = \Sigma_0$.  From the definition of $I$, it is easy to see that $I$ is closed. Let $c(t)$ denote the integral curve of the normal vector field $N$ such that $c(0) = p$. Then for any $t \in I$, the scalar curvature at $c(t)$ are the same, since $F$ is a local isometry. $I$ is open, since for any $t\in I$, the scalar curvature at $c(t)$ is positive, we can extend $I$ a little bit more at the end points. Therefore we have a local isometry $F: \Sigma \times \mathbb{R} \to M$, which means that the universal cover of $M$ splits.

Now assume that $M$ is contractible. To prove that $M$ is diffeomorphic to $\mathbb{R}^3$, from a topological result by Stallings \cite{[St]}, it suffices to prove that $M$ is simply connected at infinity and irreducible. Suppose $M$ is not simply connected at infinity, this means that there exists a sequence of closed curves $\sigma_i$ tending to infinity such that for any immersed disk $D_i$ with $\partial D_i = \sigma_i$, $D_i \cap K \neq \Phi$ where $K$ is a fixed compact set of $M$. We may assume these disks are area minimizing, by the compactness and regularity result in Theorem 3 of \cite{[S]}, a subsequence of $D_i$ converges to a complete stable minimal surface which could be immersed and improper.

We can apply the argument as before. For reader's convenience, we give some details here.
Given a point $p\in M$, we perturb the metric such that $Ric > 0$ in $K\backslash B_p(r)$ and $Ric \geq 0$ in $M\backslash B_p(r)$. Then for the perturbed metric, we have a complete immersed(not necessarily proper) stable minimal surface $\Sigma_i$ which intersects $K$, thus intersects $B_p(r)$ at some $p_i$. The surfaces $(\Sigma_i, p_i)$ have uniform regularity in any compact set in $M$. When the perturbation is smaller and smaller, a subsequence of $(\Sigma_i, p_i)$ converges to a stable minimal surface $(\Sigma, p)$. According to theorem \ref{thm1}, $\Sigma$ is totally geodesic and the normal Ricci curvature vanishes.
Then we can use arguments in page $4, 5$ and $6$ to show that $M$ splits, which contradicts that $M$ is not simply connected at infinity.

To prove that $M$ is irreducible, we can invoke the solution of Poincare conjecture by Perelman \cite{[P1]}\cite{[P2]}\cite{[P3]}.
Therefore $M$ is diffeomorphic to $\mathbb{R}^3$. This completes the proof of theorem \ref{thm2}.

\qed

\bigskip

   \end{document}